\documentclass[11pt, reqno]{amsart}
\usepackage{amsmath,amssymb,amsthm}

\usepackage{parskip}
\usepackage[T1]{fontenc}
\usepackage{newtxtext, newtxmath}
\usepackage[margin=1in]{geometry}
\usepackage{enumitem}
\usepackage[pagebackref, colorlinks=true,linkcolor=red,citecolor=blue,urlcolor=blue]{hyperref}

\theoremstyle{plain}
\newtheorem{theorem}{Theorem}[section]
\newtheorem{proposition}[theorem]{Proposition}
\newtheorem{lemma}[theorem]{Lemma}
\newtheorem{corollary}[theorem]{Corollary}
\theoremstyle{definition}
\newtheorem{remark}[theorem]{Remark}
\newtheorem{notation}[theorem]{Notation}

\DeclareMathOperator{\Ind}{Ind}
\DeclareMathOperator{\Cl}{Cl}
\DeclareMathOperator{\lk}{lk}
\DeclareMathOperator{\st}{star}
\newcommand{\Z}{\mathbb{Z}}
\newcommand{\rH}{\widetilde{H}}

\title[Field Independence of the First Seven Betti Numbers of Flag Complexes]
{Field Independence of the First Seven Betti Numbers of Flag Complexes}
\author[Omkar Javadekar]{Omkar Javadekar}
	\address{Chennai Mathematical Institute, Siruseri, Tamilnadu 603103. INDIA}
	\email{omkarjavadekar@gmail.com, omkarj@cmi.ac.in}
    
\subjclass[2020]{13F55, 13D02, 05E45, 55U10}
\keywords{Flag complex, Edge ideal, Betti numbers, Independence complex, Clique complex, Reduced homology}

\begin{document}

\begin{abstract}
In 2006, Katzman showed that the first six Betti numbers of the Stanley--Reisner ring of a flag complex are field independent. He also found flag complexes on eleven vertices whose eighth Betti number depends on the field, and asked whether the seventh is always field independent. 
We answer this affirmatively by proving a stronger, purely topological result. Let $\tau(d)$ be the least number of vertices of a flag complex whose $d$-th reduced integral homology has torsion. We prove that $\tau(d)\geq d+10$ for every $d\ge0$. This bound yields the field independence of the seventh Betti number. Equivalently, combining our result with Katzman's, for every finite simple graph $G$, the first seven Betti numbers of the edge ideal $I(G)$ are field independent.
\end{abstract}
\maketitle

\section{Introduction}

Flag complexes are simplicial complexes in which every set of pairwise adjacent vertices forms a simplex. Such complexes arise naturally in combinatorics and topology, and much of their structure can be studied through the graphs underlying them. The homology of a flag complex may contain torsion, and this torsion governs how certain algebraic invariants of the complex depend on the coefficient field. In this article, we study how many vertices a flag complex must have for torsion to appear in a given homological degree.

Let $\mathsf k$ be a field and $S=\mathsf k[x_1,\dots,x_n]$ be a standard graded polynomial ring over $\mathsf k$. Given a finite simple graph $G$ on the vertex set $\{x_1,\dots,x_n\}$ and edge set $E(G)$, the \emph{edge ideal} of $G$ is the squarefree monomial ideal $I(G)=\bigl\langle x_ix_j\mid\{x_i,x_j\}\in E(G)\bigr\rangle\subseteq S$.
The $(i,j)$-th graded Betti number of $S/I(G)$ is given by $\beta_{i,j}^{\mathsf k}(S/I(G))=\dim_{\mathsf k}\left(\operatorname{Tor}_i^S(S/I(G),\mathsf k)_j\right)$, and the $i$-th (total) Betti number is given by the sum $\beta_i^{\mathsf k}(S/I(G))=\sum_j\beta_{i,j}^{\mathsf k}(S/I(G))$.
Although $I(G)$ is defined purely in terms of the graph, its Betti numbers need not be independent of the coefficient field $\mathsf k$. Understanding this dependence leads naturally to a question about the homology of flag complexes.

The reason for this connection comes from the Stanley--Reisner correspondence. Let $\Ind(G)$ denote the independence complex of $G$, whose faces are the independent sets of $G$. Then $S/I(G)=\mathsf k[\Ind(G)]$. So, the edge ideal $I(G)$ is precisely the Stanley--Reisner ideal of $\Ind(G)$. The complexes that arise in this way are exactly the flag complexes, and conversely every flag complex is the independence complex of a finite simple graph on the same vertex set. Thus, Stanley--Reisner rings of flag complexes and quotients by edge ideals of graphs form the same class of rings, and we use the two viewpoints interchangeably. 

Hochster's formula expresses the Betti numbers of $\mathsf k[\Ind(G)]$ in terms of the reduced homology of the induced subcomplexes of $\Ind(G)$. The dimension of such a homology group  depends on the field precisely when the corresponding integral homology has torsion. In this way, the field dependence of Betti numbers is governed by torsion in the homology of flag complexes. The classical example of such field dependence is the six-vertex triangulation of $\mathbb{RP}^2$, whose first integral homology contains a copy of $\Z/2\Z$. It is the smallest simplicial complex whose homology has torsion, and the standard first example in which torsion produces field dependent behaviour of algebraic invariants (see, e.g., \cite{Reisner}).

It is natural to ask for which graphs the Betti numbers are field independent. For forests and for graphs having every vertex of degree at most two, Jacques and Katzman \cite{JacquesKatzman} showed that the Betti numbers of $I(G)$ are field independent. 
For chordal graphs, field independence was proved by H\`a and Van Tuyl \cite{HaVanTuyl}. The same holds whenever the edge ideal has a linear resolution, which by Fr\"oberg's theorem \cite{Froberg} happens exactly when the complement of the graph is chordal. In fact, the linear strand is always field independent, since $\beta_{i,i+1}^{\mathsf k}(S/I(G))$ is determined by reduced homology in degree zero, which is always torsion-free. 
On the other hand, Dalili and Kummini constructed a connected bipartite graph whose edge ideal has field dependent Betti numbers \cite{DaliliKummini}. So, field independence of Betti numbers does not hold for all graphs.

Instead of fixing the class of graphs, one may fix the homological degree and ask how large $i$ can be for $\beta_i^{\mathsf k}(S/I(G))$ to be field independent for all graphs $G$. Terai and Hibi \cite{TeraiHibi} showed that the third and fourth Betti numbers are always field independent, and Katzman \cite{Katzman} extended this to the fifth and sixth. Katzman further proved that all Betti numbers are field independent for every graph on at most ten vertices. On eleven vertices, he found exactly four graphs up to isomorphism with field dependent Betti numbers, and in each of these examples the dependence occurs only in $\beta_8$ and $\beta_9$. This left open whether $\beta_7$ is always field independent, and Katzman asked precisely this question. He also observed that if $\beta_7$ were field dependent for some graph, then there would have to be such an example on twelve vertices.

The aim of this article is to answer Katzman's question. We do it by proving a stronger statement about the presence of torsion in the homology of flag complexes. For $d\geq 0$, let $\tau(d)$ denote the least number of vertices of a flag complex $X$ such that $\rH_d(X;\Z)$ contains torsion, with $\tau(d)=\infty$ if no such complex exists. Our main theorem is the following.

\begin{theorem}[see Theorem~\ref{thm:main}]\label{thm:intro-main}
For every $d\ge0$, we have $\tau(d)\geq d+10$.
\end{theorem}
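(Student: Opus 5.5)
We argue by induction on $d$, with a base range supplied by known small cases, using the standard vertex-deletion long exact sequence for flag complexes. For a flag complex $X$ and a vertex $v$, we have $X = (X\setminus v)\cup \st(v)$ with $(X\setminus v)\cap\st(v)=\lk(v)$. Since $\st(v)$ is a cone, Mayer--Vietoris gives the exact sequence
\[
\cdots\to \rH_d(\lk v)\to \rH_d(X\setminus v)\to \rH_d(X)\to \rH_{d-1}(\lk v)\to \rH_{d-1}(X\setminus v)\to\cdots.
\]
The key features are that $X\setminus v$ and $\lk(v)$ are again flag complexes (induced subcomplexes), and that $\lk(v)$ lives on $\deg(v)$ vertices while $X\setminus v$ has $n-1$ vertices.

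\textbf{Base cases.} For $d=0$, torsion never occurs. For $d=1,\dots,k$ (small $d$) we invoke Katzman's computations: all flag complexes on at most $10$ vertices have torsion-free homology, which gives $\tau(d)\ge 11\ge d+10$ for $d\le 1$. For the next few degrees we need a direct argument. For $d\ge 2$, we use the observation that a flag complex with $\rH_d\neq 0$ has at least $2d+2$ vertices, and more precisely that a minimal torsion example must have all vertex links and deletions torsion-free in the relevant degrees.

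\textbf{Inductive step.} Take a minimal counterexample $X$ on $n\le d+9$ vertices with torsion in $\rH_d(X)$, and suppose that, by minimality, every proper induced subcomplex has torsion-free $\rH_d$. From the sequence, torsion in $\rH_d(X)$ must arise either from torsion in the cokernel of $\rH_d(\lk v)\to\rH_d(X\setminus v)$, or from torsion in $\rH_{d-1}(\lk v)$. If every vertex has $\deg(v)\le d+8$, then induction gives that $\rH_{d-1}(\lk v)$ is torsion-free, since $\lk v$ has fewer than $(d-1)+10$ vertices. The remaining possibility is a vertex of very high degree, namely $\deg v\ge n-1-\text{small}$, meaning $v$ has at most one or two non-neighbours. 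In that case $X$ is close to a cone, and we analyze it directly. If $v$ is adjacent to all other vertices, $X$ is contractible. If $v$ has exactly one non-neighbour $w$, then $X\simeq$ a suspension-type complex $\Sigma(\lk v\cap \lk w)$, or more precisely $X$ is homotopy equivalent to the suspension of $\lk(w)$, which has fewer vertices; this shifts the degree by one and preserves the inductive bound. Similar short arguments handle two non-neighbours.

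\textbf{Main obstacle.} The hard part is the cokernel term: showing that $\rH_d(X\setminus v)/\mathrm{im}\,\rH_d(\lk v)$ is torsion-free, where both groups are free. Here I would try to choose $v$ cleverly, for instance of minimum degree, and exploit that $\lk v$ is a small flag complex on few vertices whose $d$-th homology is very restricted. A flag complex with $\rH_d\ne0$ on fewer than roughly $2d+4$ vertices is essentially a join of cross-polytope pieces. That structure lets us show the map $\rH_d(\lk v)\to\rH_d(X\setminus v)$ has a split image, because generators are cross-polytope spheres that are either mapped to primitive classes or to zero. Making this precise for the vertex counts in the window $n\le d+9$, likely with a case analysis on $n-2d$, is where I expect most of the work, and possibly a computer check for the finite list of small excess cases.
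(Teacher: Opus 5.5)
You have the same skeleton as the paper: a minimal counterexample, the vertex-deletion Mayer--Vietoris sequence, and the observation that torsion in $\rH_d(X)$ must come from the cokernel of $\rH_d(\lk v)\to\rH_d(X\setminus v)$. But the proposal stops exactly where the proof has to happen. Showing that this cokernel is torsion-free is left as a plan, and the plan does not hold up:
\begin{itemize}
\item When the link has more than $2d+2$ vertices it need not be a cross-polytope; for small $d$ it may have up to $d+8$ vertices.
\item You give no reason why the image of a link class should be primitive.
\item Even if each generator mapped to a primitive class or to zero, the image of a rank $\ge 2$ group need not be a direct summand (consider $\langle a,a+2b\rangle\subset\Z^2$).
\end{itemize}
Also, your ``high-degree'' case is vacuous. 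Every link lives on $\deg v\le n-1\le d+8=(d-1)+9$ vertices, so $\rH_{d-1}(\lk v)$ is always torsion-free by induction. Your near-cone analysis is being spent on a problem that does not arise.

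The missing idea is this: do not try to prove the cokernel is torsion-free; extract constraints from the fact that it is not. Since $\rH_d(X\setminus v)$ is torsion-free, torsion in the cokernel forces $\rH_d(\lk v)\neq 0$ for \emph{every} vertex $v$. By Terai--Hibi, every link then has at least $2d+2$ vertices. Your near-cone observations (no non-neighbour gives a cone; a single non-neighbour gives a suspension of a smaller flag complex) are, in the paper, Engstr\"om's domination and the isolated-edge suspension. They should be used to show that every vertex of a minimal counterexample has at least two non-neighbours, so every link has at most $n-3$ vertices. Hence $2d+5\le n\le d+9$, so $d\le 4$, and with $n\ge 11$ only $(d,n)\in\{(2,11),(3,11),(3,12),(4,13)\}$ survive. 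None of these needs any analysis of the map:
\begin{itemize}
\item $(2,11)$ is excluded by Katzman's eleven-vertex classification. The ten-vertex statement you cite is not enough here.
\item For $(3,11)$ and $(4,13)$, every vertex has exactly two non-neighbours. So $X$ is the independence complex of a disjoint union of cycles, which is torsion-free.
\item For $(3,12)$, some vertex $v$ has three non-neighbours, else the previous case applies. Its link is a flag complex on $8=2\cdot 3+2$ vertices with $\rH_3\neq 0$, hence the join of four copies of $S^0$. Each of those eight vertices has only its antipode as a non-neighbour inside the link and is adjacent to $v$. So each needs a non-neighbour among the three non-neighbours of $v$. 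Those three vertices then carry at least $8+3=11$ non-adjacencies, but at most $3\cdot 3=9$ are allowed.
\end{itemize}
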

Theorem~\ref{thm:intro-main} is of independent interest, as it gives a lower bound on the number of vertices required for torsion to occur in the integral homology of a flag complex, without reference to edge ideals or Betti numbers. We emphasize this point because the theorem applies in a broader topological setting than the one needed for our algebraic application. Using Theorem~\ref{thm:intro-main} along with the relationship between torsion in integral homology and the field dependence of Betti numbers, we obtain the following consequence for edge ideals.

\begin{corollary}[see Corollary~\ref{cor:beta7}]\label{cor:intro-beta7}
For every graph $G$ and every field $\mathsf k$, the Betti number $\beta_7^{\mathsf k}(S/I(G))$ is
independent of $\mathsf k$.
\end{corollary}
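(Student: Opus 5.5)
The plan is to derive the corollary directly from Theorem~\ref{thm:intro-main}, using Hochster's formula and the universal coefficient theorem. Write $\Delta=\Ind(G)$ on the vertex set $V$, and for $W\subseteq V$ let $\Delta_W$ be the induced subcomplex. Hochster's formula gives
\[
\beta_{i,j}^{\mathsf k}(S/I(G))=\sum_{W\subseteq V,\ |W|=j}\dim_{\mathsf k}\rH_{j-i-1}(\Delta_W;\mathsf k).
\]
Since $\beta_7^{\mathsf k}=\sum_j\beta_{7,j}^{\mathsf k}$, it suffices to show that each summand $\dim_{\mathsf k}\rH_{j-8}(\Delta_W;\mathsf k)$ with $|W|=j$ is independent of $\mathsf k$.

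The dimension $\dim_{\mathsf k}\rH_d(X;\mathsf k)$ depends only on the characteristic $p$ of $\mathsf k$. By the universal coefficient theorem it equals
\[
\operatorname{rank}\rH_d(X;\Z)+t_p(\rH_d(X;\Z))+t_p(\rH_{d-1}(X;\Z)),
\]
where $t_p(A)$ is the number of cyclic summands of $A$ of order divisible by $p$, and $t_0=0$. So this dimension is field independent as soon as both $\rH_d(X;\Z)$ and $\rH_{d-1}(X;\Z)$ are torsion-free. Here we take $X=\Delta_W$ and $d=j-8$.

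Next, an induced subcomplex of a flag complex is again flag, so $\Delta_W$ is a flag complex on exactly $j$ vertices. We then check the two relevant degrees.
\begin{itemize}[leftmargin=*]
\item Degree $d=j-8$. If $d\ge 0$ and $\rH_{d}(\Delta_W;\Z)$ had torsion, Theorem~\ref{thm:intro-main} would force $j\ge \tau(d)\ge (j-8)+10=j+2$, which is absurd.
\item Degree $d-1=j-9$. If $j-9\ge0$ and $\rH_{j-9}(\Delta_W;\Z)$ had torsion, we would get $j\ge (j-9)+10=j+1$, again absurd.
\end{itemize}
The negative degrees need no argument, since $\rH_{-1}$ is either $0$ or $\Z$, and lower degrees vanish.

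Hence every summand is field independent, so $\beta_{7,j}^{\mathsf k}$ is field independent for every $j$, and therefore so is $\beta_7^{\mathsf k}$. There is no real obstacle here; the only care needed is to track the degree shift $j-i-1$ correctly and to remember that the $\operatorname{Tor}$ term in the universal coefficient theorem involves $\rH_{d-1}$. That is exactly why the bound needs the margin of $+10$ rather than $+9$.
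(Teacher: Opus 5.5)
Your proof is correct and is essentially the paper's argument. The paper applies Corollary~\ref{cor:tau} with $i=7$, which rests on Hochster's formula and the universal coefficient theorem as in Proposition~\ref{prop:crit}, and then concludes from $\tau(d)\ge d+10$; you unwind that corollary directly, using only the direction you need and checking the two degrees $j-8$ and $j-9$ for each induced flag subcomplex $\Delta_W$ (which incidentally shows that each graded piece $\beta_{7,j}^{\mathsf k}$ is field independent).
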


Together with Katzman's result for $i\le6$, this shows that the first seven Betti numbers of $S/I(G)$ are field independent for every finite simple graph $G$. This result is best possible, since Katzman's  $11$-vertex examples have field dependent eighth Betti number. Therefore, seven is the largest initial segment of Betti numbers that is field independent for every graph. The same examples of Katzman also give $\tau(1)=11=1+10$. Hence, the bound we obtain in Theorem~\ref{thm:intro-main} is attained in degree one and the constant $10$ cannot be increased. 

The idea of the proof of Theorem~\ref{thm:intro-main} is as follows. We assume that the proposed bound fails and choose a counterexample with the least number of vertices. Using a result of Engström~\cite{Engstrom} and the behaviour of independence complexes under suspension, we show that every vertex has degree at least two. A Mayer--Vietoris argument, together with a result of Terai and Hibi~\cite{TeraiHibi} gives an upper bound on the degrees of the vertices. These bounds leave only four possible pairs $(d,m)$. Finally, we rule out three of these four cases using the preceding results, and the remaining case by a direct counting argument.

The article is organized as follows. In Section~\ref{sec:torsion}, we reduce the field independence
of Betti numbers to torsion in flag complexes and introduce the function $\tau(d)$. In
Section~\ref{sec:tools}, we collect the homological and combinatorial results about flag complexes
needed in the proof. Finally, in Section~\ref{sec:main}, we prove the main theorem and deduce the
field independence of the seventh Betti number.

\section*{Acknowledgements}
The author acknowledges support from a Postdoctoral Fellowship at the Chennai Mathematical Institute and additional support from the Infosys Foundation.

\section{Reduction to torsion in flag complexes}\label{sec:torsion}

In this section, we reduce the question of when Betti numbers are independent of the field to a question about torsion in reduced homology. We begin by recalling the necessary facts about Betti numbers, reduced homology, and flag complexes, and introduce the notation and terminology that will be used throughout the paper. We then define $\tau(d)$ to be the least number of vertices of a flag complex whose $d$-th reduced homology carries torsion. This quantity captures the obstruction to field independence and allows us to reformulate Katzman’s question as a question about $\tau(d)$.

\begin{notation}
Let $G=(V,E)$ be a finite simple graph. For $v\in V$, we write $N_G(v)$ for the \emph{open neighbourhood} of $v$, i.e.,
$N_G(v)=\{u\in V\mid\{u,v\}\in E\}$.\\
The \emph{closed neighbourhood} of $v$ is $N_G[v]=N_G(v)\cup\{v\}$, and the degree of $v$ is $\deg_G(v)=|N_G(v)|$.\\ We denote by $\overline{G}$ the \emph{complement} of $G$, i.e., the graph on $V$ in which two distinct vertices are adjacent if and only if they are not adjacent in $G$.\\
A \emph{clique} in $G$ is a set of pairwise adjacent vertices, and the \emph{clique complex} of $G$, denoted by $\Cl(G)$, is the simplicial complex whose faces are precisely the cliques of $G$.\\
The \emph{independence complex} of $G$ is $\Ind(G)=\Cl(\overline{G})$, whose faces are the independent sets of $G$. In particular, if $I(G)$ denotes the edge ideal of $G$ in $S=\mathsf{k}[x_v\mid v\in V]$, then $S/I(G)=\mathsf{k}[\Ind(G)]$.\\
A simplicial complex is called \emph{flag} if it is the clique complex of some graph. Equivalently, a simplicial complex is flag if and only if all of its minimal non-faces have cardinality two. Thus, the flag complexes are precisely the clique complexes $\Cl(G)$ of graphs $G$, or, equivalently, the independence complexes $\Ind(G)$ of graphs $G$, since $\Ind(G)=\Cl(\overline{G})$.\\
For a simplicial complex $\Delta$ on vertex set $V$ and $W\subseteq V$, we write $\Delta|_W$ for the \emph{induced
subcomplex} on $W$, i.e., we have $\Delta|_W=\{F\in\Delta\mid F\subseteq W\}$.
For $v\in V$, we let $\Delta\setminus v$ denote $\Delta|_{V\setminus\{v\}}$.\\
The \emph{star} of a vertex $v$ in a simplicial complex $X$ is the simplicial complex $\st_X(v)=\{\sigma\in X\mid \sigma\cup{v}\in X\}$. The \emph{link} of $v$ in $X$ is the simplicial complex $\lk_X(v)=\{\sigma\in X\mid v\notin\sigma,\ \sigma\cup{v}\in X\}$.

All homology groups considered in the article are reduced simplicial homology with coefficients in $\mathsf{k}$ or $\mathbb Z$. We use $\beta_i^{\mathsf{k}}(-)$ to denote the $i$-th Betti number, computed over the field $\mathsf{k}$. 
\end{notation}

\begin{remark}\label{eq:basic}
We record two facts about $\Ind(G)$ that will be used later on in the article, both of which are straightforward to verify by definition.
\begin{enumerate}
    \item[{\rm (a)}] $\Ind(G)\setminus v=\Ind(G\setminus v)$.
    \item[{\rm (b)}] $\lk_{\Ind(G)}(v)=\Ind\bigl(G\setminus N_G[v]\bigr).$
\end{enumerate}
\end{remark}

We recall the following fundamental result, which relates the Betti numbers of a Stanley–Reisner ring to the reduced homology groups of induced subcomplexes.
\begin{remark}[Hochster's formula, see {\cite{Hochster}}, {\cite[Theorem~1.1]{Katzman}}]\label{rem:hochster}
Given any field $\mathsf k$ and a simplicial complex $\Delta$ on $V$,
\[
\beta_i^\mathsf k\bigl(\mathsf k[\Delta]\bigr)=\sum_{W\subseteq V}\dim_{\mathsf k} \left(\rH_{\,\vert W\vert-i-1}\bigl(\Delta|_W;\mathsf k\bigr)\right).
\]
\end{remark}

We next recall the universal coefficient theorem and its consequence for the dimensions of homology groups over different fields.

\begin{remark}\label{rem:uct}
Let $\Delta$ be a simplicial complex and $\rH_i(\Delta;\Z)\cong\Z^{r_i}\oplus T_i$, where $T_i$ is the torsion part. Let $t_i(p)$ denote the number of cyclic summands of $T_i$ whose order is divisible by $p$. Then 
\begin{enumerate}
    \item[{\rm (a)}]  $\dim_{\mathbb Q}\left(\rH_i(\Delta;\mathbb Q)\right)=r_i$
    \item[{\rm (b)}] Given any prime $p$, we have $\dim_{\mathbb F_p}\left(\rH_i(\Delta;\mathbb F_p)\right)=r_i+t_i(p)+t_{i-1}(p)$. This follows from the well-known universal coefficient theorem for homology (see, e.g., \cite[Theorem 3A.3]{Hatcher}).
    \item[{\rm (c)}] By (a) and (b) above, given any field $\mathsf k$, we have $\dim_{\mathsf k}\left(\rH_i(\Delta;\mathsf k)\right)\geq \dim_{\mathbb Q}\left(\rH_i(\Delta;\mathbb Q)\right)$.
    \item[{\rm (d)}] From (a), (b), and (c) above, $\dim_{\mathsf k}\left(\rH_i(\Delta;\mathsf k)\right)$ is independent of $\mathsf k$ if and only if $T_i=T_{i-1}=0$. In this case, we have  $\dim_{\mathsf k}\left(\rH_i(\Delta;\mathsf k)\right)= r_i$.
\end{enumerate}
\end{remark}

The preceding remark allows us to translate the field independence of homology dimensions into a torsion-freeness condition. Combining this with Hochster's formula gives the following criterion.

\begin{proposition}\label{prop:crit}
Let $\Delta$ be a simplicial complex on $V$ and $i\ge1$. Then $\beta_i^\mathsf k(\mathsf k[\Delta])$ is independent of $\mathsf k$ if and only if $\rH_{\vert W\vert-i-1}(\Delta|_W;\Z)$ and $\rH_{\vert W\vert-i-2}(\Delta|_W;\Z)$ are torsion-free for every $W\subseteq V$.
\end{proposition}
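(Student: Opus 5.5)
The plan is to combine Hochster's formula (Remark~\ref{rem:hochster}) with the torsion criterion of Remark~\ref{rem:uct}(d), applied separately to each summand.

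For each $W\subseteq V$, put $q_W=|W|-i-1$. Hochster's formula gives
\[
\beta_i^{\mathsf k}(\mathsf k[\Delta])=\sum_{W\subseteq V}\dim_{\mathsf k}\rH_{q_W}(\Delta|_W;\mathsf k).
\]

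\emph{Sufficiency.} Suppose that $\rH_{q_W}(\Delta|_W;\Z)$ and $\rH_{q_W-1}(\Delta|_W;\Z)$ are torsion-free for every $W$. By Remark~\ref{rem:uct}(d), each summand equals the free rank $r_{q_W}(\Delta|_W)$, which does not depend on $\mathsf k$. Hence the sum is independent of $\mathsf k$.

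\emph{Necessity.} I would use the inequality of Remark~\ref{rem:uct}(c): for every field $\mathsf k$ and every $W$,
\[
\dim_{\mathsf k}\rH_{q_W}(\Delta|_W;\mathsf k)\ \ge\ \dim_{\mathbb Q}\rH_{q_W}(\Delta|_W;\mathbb Q).
\]
Summing over $W$ gives $\beta_i^{\mathsf k}\ge\beta_i^{\mathbb Q}$, and equality holds if and only if equality holds in every summand. Suppose some $W$ has torsion in $\rH_{q_W}(\Delta|_W;\Z)$ or in $\rH_{q_W-1}(\Delta|_W;\Z)$. Choose a prime $p$ dividing the order of some cyclic torsion summand there. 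By Remark~\ref{rem:uct}(b), $t_{q_W}(p)+t_{q_W-1}(p)>0$, so the $W$-summand over $\mathbb F_p$ strictly exceeds its rational counterpart. Therefore $\beta_i^{\mathbb F_p}>\beta_i^{\mathbb Q}$, and $\beta_i$ depends on the field.

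Some small points need checking:
\begin{enumerate}
\item[(1)] Remark~\ref{rem:uct}(b) must hold in all degrees, including the reduced degree $-1$ and negative degrees. There the groups are zero or free, so the formula holds trivially. (The empty complex has $\rH_{-1}=\Z$.)
\item[(2)] The statement requires only that $\Delta|_W$ be torsion-free in the two relevant degrees, so the argument must not need torsion-freeness in any other degree.
\end{enumerate}
The hypothesis $i\ge1$ seems needed only to match the $\beta_i$ indexing; the argument itself does not appear to use it.

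The only real obstacle is the necessity direction. There I have to make sure no cancellation can occur between summands. This is handled by the termwise inequality of Remark~\ref{rem:uct}(c): all discrepancies are nonnegative, so one strictly positive discrepancy forces a strict inequality of the totals.
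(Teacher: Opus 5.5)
Your proposal is correct and follows essentially the same route as the paper: Hochster's formula, then the termwise inequality of Remark~\ref{rem:uct}(c) to rule out cancellation between summands, and Remark~\ref{rem:uct}(d) applied to each summand. The only difference is that you make the necessity direction explicit by choosing a prime $p$ and comparing $\beta_i^{\mathbb F_p}$ with $\beta_i^{\mathbb Q}$; the paper leaves this step implicit in its appeal to (d).
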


\begin{proof}
By Remark~\ref{rem:hochster}, given any field $\mathsf k$, the Betti number $\beta_i^\mathsf k\bigl(\mathsf k[\Delta]\bigr)$ is a finite sum of nonnegative integers $\dim_{\mathsf k}\left(\rH_{\vert W\vert-i-1}(\Delta|_W;\mathsf k)\right)$. 
Moreover, we have  $\dim_{\mathsf k}\left(\rH_{\vert W\vert-i-1}(\Delta|_W;\mathsf k)\right) \geq \dim_{\mathbb Q}\left(\rH_{\vert W\vert-i-1}(\Delta|_W;\mathbb Q)\right)$ by Remark~\ref{rem:uct}(c). Hence, $\beta_i^\mathsf k\bigl(\mathsf k[\Delta]\bigr)$ is field independent if and only if every summand is field independent. Thus, applying Remark~\ref{rem:uct}(d) to every summand indexed by $W$, we get that $\beta_i^\mathsf k(\mathsf k[\Delta])$ is independent of $\mathsf k$ if and only if $\rH_{\vert W\vert-i-1}(\Delta|_W;\Z)$ and $\rH_{\vert W\vert-i-2}(\Delta|_W;\Z)$ are torsion-free.
\end{proof}

In view of the above proposition, the field independence of a given Betti number depends on two consecutive reduced homologies for any given $W\subseteq V$. Looking at this from a different perspective, given $W\subseteq V$, we see that torsion in a single reduced homology of the subcomplex induced on $W$ affects two consecutive Betti numbers. In fact, Dalili--Kummini (see \cite[Proposition 5.3]{DaliliKummini}) proved that the Betti table of a monomial ideal over the field $\mathbb Q$ can be obtained from the Betti table over any field by a sequence of consecutive cancellations.

\begin{notation}\label{not:tau}
For $d\geq 0$, define $\tau(d)$ to be the minimum number of vertices of a flag complex $X$ such that $\rH_d(X;\Z)$ contains torsion. If no such flag complex exists, we set $\tau(d)=\infty$.
\end{notation}

We now specialize the preceding criterion to Stanley--Reisner rings of flag complexes. This yields the following corollary, which gives the key condition for the field dependence of Betti numbers, and will be used to answer Katzman's question.

\begin{corollary}\label{cor:tau}
Given any $i \geq 1$, the Betti number $\beta_i^{\mathsf k}(S/I(G))$ depends on $\mathsf k$ for some graph $G$ if and only if
$\tau(d)\le d+i+2$ for some $d\geq 0$.
\end{corollary}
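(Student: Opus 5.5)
We prove both directions using Proposition~\ref{prop:crit} applied to $\Delta=\Ind(G)$, together with two facts: every induced subcomplex of a flag complex is again flag, and every flag complex is $\Ind(H)$ for some graph $H$ on the same vertex set.

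For the forward direction, suppose $\beta_i^{\mathsf k}(S/I(G))$ depends on $\mathsf k$. By Proposition~\ref{prop:crit} there is $W\subseteq V$ such that $\rH_{|W|-i-1}(\Ind(G)|_W;\Z)$ or $\rH_{|W|-i-2}(\Ind(G)|_W;\Z)$ has torsion. Set $X=\Ind(G)|_W=\Ind(G|_W)$, using Remark~\ref{eq:basic}(a) repeatedly; it is a flag complex on $|W|$ vertices.
\begin{itemize}
\item If $\rH_d(X;\Z)$ has torsion with $d=|W|-i-2$, then $\tau(d)\le |W|=d+i+2$.
\item If instead $d=|W|-i-1$, then $\tau(d)\le |W|=d+i+1\le d+i+2$.
\end{itemize}
We must also check $d\ge 0$. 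Reduced homology in negative degrees is torsion-free: $\rH_{-1}$ is $\Z$ or $0$, and lower groups vanish. So torsion forces $d\ge 0$. In fact $\rH_0$ is always free, so torsion even forces $d\ge1$, though we do not need this.

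For the converse, suppose $\tau(d)\le d+i+2$ for some $d\ge0$. Take a flag complex $X$ on $n=\tau(d)\le d+i+2$ vertices with torsion in $\rH_d(X;\Z)$. We want a graph on exactly $d+i+2$ vertices whose independence complex has torsion in degree $d$, so that Proposition~\ref{prop:crit} applies with $W=V$ and $|W|-i-2=d$.

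To reach this vertex count, pad $X$ with $k=d+i+2-n\ge 0$ new vertices, each adjacent in $\overline{G}$ to every vertex. Equivalently, the new vertices are isolated in $G$, so $\Ind(G)$ is the join of $X$ with a simplex. This is contractible, which destroys the torsion, so this padding fails.

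Instead we add $k$ \emph{isolated} vertices to the complex: take $\Ind(G)=X\sqcup\{k\text{ points}\}$. This is the independence complex of $G=\overline{G_X}$ with $k$ new vertices joined to all old vertices and to each other in $G$. It is still flag, since the disjoint union of flag complexes is flag. Adding isolated points changes only $\rH_0$, so $\rH_d$ keeps its torsion when $d\ge1$. When $d=0$, $\rH_0$ is free, so $\tau(0)=\infty$ and this case is vacuous.

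Now take $W=V(G)$, so $|W|-i-2=d$ and $\rH_d(\Ind(G);\Z)$ has torsion. Proposition~\ref{prop:crit} then shows that $\beta_i^{\mathsf k}(S/I(G))$ depends on $\mathsf k$.

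The main subtlety is this padding step, which must preserve torsion in degree $d$ while matching the vertex count exactly; the hypothesis $i\ge1$ makes the proposition applicable. Disjoint isolated vertices handle it cleanly.
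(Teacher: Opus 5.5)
Your proof is correct and follows the paper's argument: Proposition~\ref{prop:crit} gives the forward direction. For the converse, you pad a minimal torsion example with vertices adjacent in $G$ to everything, so that $\Ind(G)$ gains only isolated points and $\rH_d$ is unchanged for $d\ge 1$; the paper adds these vertices one at a time, while you add all $k$ at once (your abandoned first attempt, which yields a cone, can simply be deleted).
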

\begin{proof}
By Proposition~\ref{prop:crit}, $\beta_i^{\mathsf k}(S/I(G))$ depends on $\mathsf k$ for some graph $G$ if and only
if there exist $m\in \mathbb N$ and a flag complex $X$  on $m$ vertices having torsion in $\rH_{m-i-1}(X; \mathbb Z)$ or $\rH_{m-i-2}(X; \mathbb Z)$.  Since $\rH_0(X; \mathbb Z)$ is torsion-free for every flag complex $X$, we have $\tau(0)=\infty$. Thus, we may assume $d\geq 1$ throughout in the argument below.

$(\Longrightarrow)$ Assume that there is a flag complex $X$ on $m$ vertices with torsion in $\rH_d(X; \mathbb Z)$, where $d=m-i-1$ or $d=m-i-2$, or equivalently, $m=d+i+1$ or $m=d+i+2$. By the minimality of $\tau(d)$, we have $\tau(d)\le m$. Hence, $\tau(d)\le d+i+1$ or $\tau(d) \leq d+i+2$. We therefore conclude $\tau(d) \leq d+i+2$. 

$(\Longleftarrow)$ Assume that $\tau(d)\le d+i+2$ for some $d \geq 1$, and set $m_0=\tau(d)$. We want to show the existence of a graph $G$ for which $\beta_i^{\mathsf k}(S/I(G))$ depends on the field $\mathsf k$.\\
Let $G_0$ be a graph on $m_0$ vertices such that $\rH_d(\Ind(G_0); \mathbb Z)$ has torsion. Observe that if $G_1$ is the graph obtained from $G_0$ by adding a new vertex $v$ that is adjacent to every vertex of $G_0$, then $\Ind(G_1)$ is the disjoint union of $\Ind(G_0)$ with a point. Since $d \geq 1$, we see that $\rH_d(\Ind(G_0); \mathbb Z) \cong \rH_d(\Ind(G_1); \mathbb Z)$. The graph $G_1$ has one more vertex than $G_0$. Therefore, iterating this process $(d+i+2)-m_0$ times yields a graph $G$ on $m=d+i+2$ vertices such that $\rH_d(\Ind(G); \mathbb Z)$ has torsion. Observe that $d=m-i-2$. Thus, taking $W$ to be the vertex set of $\Ind(G)$ and applying Proposition~\ref{prop:crit}, we get that $\beta_i^{\mathsf k}(S/I(G))$ depends on $\mathsf k$.
\end{proof}

\section{Torsion in the homology of flag complexes}\label{sec:tools}

In this section, we develop the homological and combinatorial tools about flag complexes needed for the proof of the main theorem. We discuss independence complexes under joins and suspensions, vertex domination, a link criterion for torsion via the Mayer--Vietoris sequence, and bounds on the number of vertices supporting homology in a given degree. We begin with known results of Katzman on torsion in flag complexes on a small number of vertices.

\begin{remark}\label{rem:eleven} \hfill{}
\begin{enumerate}
  \item[{\rm (a)}] By \cite[Theorem~4.1]{Katzman}, no flag complex on at most $10$ vertices has torsion in $\rH_d(-;\mathbb Z)$ for any $d$.
  \item[{\rm (b)}] Katzman's classification (see \cite[Appendix~A]{Katzman}) shows that there exist flag complexes on $11$ vertices whose Betti numbers depend on the field, and that for every such complex, the dependence occurs only in $\beta_8$ and $\beta_9$. The existence of such a complex also follows from \cite[Theorem~1.1(ii)]{Adamaszek}.\\
  Let $X$ be a flag complex on $11$ vertices. Taking $W$ to be the full vertex set in Proposition~\ref{prop:crit}, we see that torsion in $\rH_d(X;\mathbb Z)$ forces the field dependence of $\beta_{9-d}$ and $\beta_{10-d}$. Since only  $\beta_8$ and $\beta_9$ are dependent, such an $X$ has torsion exactly in degree $d=1$, and in no other degree. 
  \item[{\rm (c)}] From (a) and (b) above, we get 
  \begin{enumerate}
      \item[{\rm (i)}]  $\tau(1)=11$.
      \item[{\rm (ii)}] No flag complex on eleven vertices has torsion in $\rH_d(X;\mathbb Z)$ for $d\neq 1$.
  \end{enumerate}
\end{enumerate}
\end{remark}

\begin{remark}[see {\cite[Remark~2.2]{Barmak}}]\label{rem:join}
Let $G_1$ and $G_2$ be graphs on disjoint vertex sets $V(G_1)$ and $V(G_2)$. Let $G$ be a graph on the vertex set $V(G_1)\sqcup V(G_2)$. Then $G=G_1\sqcup G_2$ if and only if $\Ind(G)=\Ind(G_1)*\Ind(G_2)$. In particular, if $G_1$ is a single edge, then $\Ind(G)$ is the suspension $\Sigma\bigl(\Ind(G_2)\bigr)$, and hence $\rH_d(\Ind(G);\Z)\cong\rH_{d-1}(\Ind(G_2);\Z)$ for every $d\geq 1$.
\end{remark}

The following observation about the homology of independence complexes of graphs with maximum degree at most two will be useful in what follows.

\begin{remark}\label{rem:deg2}
If every vertex of $G$ has degree at most two, then each connected component of $G$ is a path or a cycle. Thus, by Remark~\ref{rem:join}, $\mathrm{Ind}(G)$ is a join of the complexes of the form $\mathrm{Ind}(P_n)$ and $\mathrm{Ind}(C_n)$. Each such complex is known to be homotopy equivalent to a wedge of spheres (see \cite[Proposition 4.6, Proposition 5.2]{Kozlov}). Since a join of wedges of spheres is again a wedge of spheres, we see that $\widetilde{H}_d(\mathrm{Ind}(G);\mathbb{Z})$ is torsion-free for every $d$.
\end{remark}

\begin{remark}[see {\cite[Lemma~2.4]{Engstrom}}]\label{rem:engstrom}
Let $v\ne u$ be vertices of a graph $G$ with $N_G(v)\subseteq N_G(u)$. Then $\Ind(G)$ collapses onto $\Ind(G\setminus u)$. In particular, $\Ind(G)$ and $\Ind(G \setminus u)$ have isomorphic homology with integer coefficients in every degree.
\end{remark}

We will use the following simple observation concerning graphs with a vertex of degree at most one.
\begin{lemma}\label{lem:mindeg}
Let $G$ be a graph with at least two vertices. If $G$ has a vertex of degree at most one, then one of the following holds: 
\begin{enumerate}
    \item[{\rm (a)}] $G$ has vertices $v\ne u$ such that $N_G(v)\subseteq N_G(u)$.  
    \item[{\rm (b)}] $G$ has a connected component consisting of a single edge. 
\end{enumerate}  
\end{lemma}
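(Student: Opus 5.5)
The plan is a direct case analysis on a vertex $v$ with $\deg_G(v)\le 1$, which exists by hypothesis.

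\emph{Case $\deg_G(v)=0$.} Then $N_G(v)=\emptyset$. Since $G$ has at least two vertices, we may pick any $u\neq v$. The inclusion $N_G(v)=\emptyset\subseteq N_G(u)$ then holds trivially, so (a) holds.

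\emph{Case $\deg_G(v)=1$.} Write $N_G(v)=\{w\}$ and look at the neighbourhood of $w$. Since $v\in N_G(w)$, there are two subcases.
\begin{enumerate}
\item[(i)] If $N_G(w)=\{v\}$, then $v$ and $w$ are adjacent only to each other. So $\{v,w\}$ spans a connected component of $G$ that is a single edge, and (b) holds.
\item[(ii)] Otherwise $w$ has a neighbour $u\neq v$. Then $u\neq v$, and $N_G(v)=\{w\}\subseteq N_G(u)$ because $u$ is adjacent to $w$. Hence (a) holds with the pair $(v,u)$.
\end{enumerate}

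These cases cover every possibility, so the argument is a short exhaustive check with no real obstacle. The only step that needs care is subcase (ii): the vertex $u$ must be chosen distinct from $v$, which is why we take $u$ to be a neighbour of $w$ other than $v$ rather than $w$ itself. This matters because $w\notin N_G(w)$, so $w$ does not satisfy $N_G(v)\subseteq N_G(w)$.
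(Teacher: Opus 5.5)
Your proof is correct and follows the paper's argument essentially verbatim: the same split into $\deg_G(v)=0$ and $N_G(v)=\{w\}$, then on whether $w$ has a neighbour $u\neq v$ (giving (a)) or $N_G(w)=\{v\}$ (giving (b)). Your remark on why $u$ must be a neighbour of $w$ rather than $w$ itself is a correct and useful clarification.
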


\begin{proof}
Let $\deg_G(v)\le1$. If $\deg_G(v)=0$ then $N_G(v)=\emptyset\subseteq N_G(u)$ for any $u\ne v$. So assume $N_G(v)=\{w\}$.\\ If $\deg_G(w)\ge2$, then choosing $u\in N_G(w)\setminus\{v\}$, we have  $u\ne v$ and $w\in N_G(u)$. Thus, $N_G(v)=\{w\}\subseteq N_G(u)$. \\ 
If $\deg_G(w)=1$, then $N_G(w)=\{v\}$, and $\{v,w\}$ is a connected component consisting of a single edge.
\end{proof}

The following lemma shows that if deleting a vertex and taking its link introduce no torsion in the relevant dimensions, then any torsion in the complex forces nontrivial homology in the link.
\begin{lemma}\label{lem:cyc}
Let $X$ be a flag complex on $V$ and $d \geq 1$ be such that $\rH_d(X;\Z)$ has torsion. If $v\in V$ is such that both $\rH_d(X\setminus v;\Z)$ and $\rH_{d-1}(\lk_X(v);\Z)$ are torsion-free, then
$\rH_d(\lk_X(v);\Z)\ne0$.
\end{lemma}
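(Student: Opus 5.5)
We will use the Mayer--Vietoris sequence for the decomposition $X = (X\setminus v)\cup \st_X(v)$. Since $X$ is flag, the intersection $(X\setminus v)\cap \st_X(v)$ is exactly $\lk_X(v)$: a face of the star not containing $v$ is a face $\sigma$ with $\sigma\cup\{v\}\in X$, which is the link. The star is a cone with apex $v$, so it is acyclic. The reduced Mayer--Vietoris sequence with integer coefficients therefore reads
\[
\cdots\to \rH_d(\lk_X(v);\Z)\to \rH_d(X\setminus v;\Z)\to \rH_d(X;\Z)\xrightarrow{\ \partial\ } \rH_{d-1}(\lk_X(v);\Z)\to \rH_{d-1}(X\setminus v;\Z)\to\cdots
\]
This is valid for $d\ge 1$. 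If the link is empty (so $v$ is isolated), we use the convention $\rH_{-1}(\emptyset)=\Z$; alternatively, in that case $X$ is $X\setminus v$ plus a disjoint point, so $\rH_d(X)\cong\rH_d(X\setminus v)$ for $d\ge1$, which is torsion-free by hypothesis, a contradiction.

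We argue by contradiction and suppose $\rH_d(\lk_X(v);\Z)=0$. Exactness then gives a short exact sequence
\[
0\to \rH_d(X\setminus v;\Z)\to \rH_d(X;\Z)\to K\to 0,
\]
where $K=\ker\bigl(\rH_{d-1}(\lk_X(v);\Z)\to\rH_{d-1}(X\setminus v;\Z)\bigr)$. The group $K$ is a subgroup of the torsion-free group $\rH_{d-1}(\lk_X(v);\Z)$, so it is torsion-free. It is also finitely generated, hence free, and the sequence splits:
\[
\rH_d(X;\Z)\cong \rH_d(X\setminus v;\Z)\oplus K.
\]
Both summands are torsion-free, so $\rH_d(X;\Z)$ is torsion-free, which contradicts the hypothesis. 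Hence $\rH_d(\lk_X(v);\Z)\neq 0$.

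The only delicate point is identifying the intersection with the link, which is exactly where flagness enters, together with the edge case of an empty link. Without splitting one can argue directly instead: a torsion element $x$ with $nx=0$ maps under $\partial$ to a torsion element of the torsion-free group $\rH_{d-1}(\lk_X(v);\Z)$, so $\partial x=0$. Then $x$ lies in the image of the injective map from $\rH_d(X\setminus v;\Z)$, and its preimage is a torsion element of a torsion-free group, so it is zero and $x=0$.
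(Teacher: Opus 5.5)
Your proof is correct and follows the paper's argument: the same Mayer--Vietoris sequence for $X=(X\setminus v)\cup\st_X(v)$ with the star a cone, the same handling of the empty-link case via $X$ being $X\setminus v$ plus a disjoint point, and, in your final paragraph, the same chase of a torsion element through $\partial$ and the injective map. Your splitting variant is just an equivalent repackaging. One small correction: $(X\setminus v)\cap\st_X(v)=\lk_X(v)$ holds in every simplicial complex, so this is not where flagness enters; in fact the whole argument goes through for arbitrary simplicial complexes.
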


\begin{proof}
Let $X=\Ind(G)$ and $v \in V$, $d \geq 1$ be as given in the statement. We claim that $\lk_X(v) \neq \emptyset$, i.e., $\lk_X(v)$ has at least one vertex. Indeed, if $\lk_X(v)$ has no vertices, then $v$ is adjacent to every other vertex in $G$, and hence $X$ is the disjoint union of $X\setminus v$ and the point $v$. This forces $\rH_d(X;\Z)\cong\rH_d(X\setminus v;\Z)$ for all $d\ge1$. But this is a contradiction to the given hypotheses that $\rH_d(X;\Z)$ has torsion while $\rH_d(X\setminus v;\Z)$ does not. 

Observe that by definitions of $\lk_X(v)$ and $\st_X(v)$, we have $X= (X \setminus v) \cup \st_X(v)$ and $\lk_X(v)=(X\setminus v) \cap \st_X(v)$.
Since $\st_X(v)$ is a cone over $v$, from the Mayer--Vietoris sequence on homology (see \cite[Section 2.2]{Hatcher}), we get the following exact sequence
\[ \rH_d\bigl(\lk_X(v); \mathbb Z\bigr)\xrightarrow{\ f\ }\rH_d(X\setminus v; \mathbb Z) \xrightarrow{\ g\ }\rH_d(X; \mathbb Z)\xrightarrow{\ \partial\ }\rH_{d-1}\bigl(\lk_X(v); \mathbb Z\bigr). \]

Let $0\ne \gamma \in\rH_d(X;\Z)$ be a torsion element. Then $\partial(\gamma)$ is a torsion element of $\rH_{d-1}(\lk_X(v);\Z)$. Since $\rH_{d-1}(\lk_X(v);\Z)$ is torsion-free, we get $\partial(\gamma)=0$. Since $\ker(\partial)= \operatorname{Im}(g)$, we obtain $\gamma\in\operatorname{Im}(g)$. Let, if possible,  $\rH_d(\lk_X(v);\Z)=0$. Then $\ker(g)=\operatorname{Im}(f)=0$, and hence $g$ is injective. This forces $\operatorname{Im}(g) \cong \rH_d(X\setminus v;\Z)$ to be torsion-free. But this is a contradiction since $\operatorname{Im}(g)$ contains the torsion element $\gamma$. Hence, we must have $\rH_d(\lk_X(v);\Z)\ne0$, as required.
\end{proof}

We next consider the minimum number of vertices required for nontrivial homology in a given homological degree, and characterize the extremal case for independence complexes. 
\begin{remark}[see {\cite[Lemma~2.1]{TeraiHibi}}]\label{rem:THfield}
Let $\mathsf k$ be any field, $\Delta$ a flag complex on $N$ vertices, and $n\ge0$. 
\begin{enumerate}
    \item[{\rm (a)}] If $N<2n+2$, then $\rH_n(\Delta;\mathsf k)=0$.
    \item[{\rm (b)}] If $N=2n+2$, then $\rH_n(\Delta;\mathsf k)\ne0$ if and only if $\Delta$ is the join of $n+1$ copies of $S^0$.
\end{enumerate}
\end{remark}

The above remark gives a useful vertex bound, with a precise description of the extremal case. In the next result, we use this to obtain the corresponding statement for homology with integer coefficients of independence complexes.
\begin{lemma}\label{lem:TH}
Let $G$ be a graph on a vertex set $V$ and $\Delta=\Ind(G)$. Then the following hold. 
\begin{enumerate}
    \item[{\rm (a)}] If $\rH_n(\Delta;\Z)\ne0$, then
$\vert V \vert \ge2n+2$. 
\item[{\rm (b)}] If $\vert V \vert =2n+2$ and $\rH_n(\Delta;\Z)\ne0$, then $G$ is a disjoint union of $n+1$ edges. 
\end{enumerate}
\end{lemma}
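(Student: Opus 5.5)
The plan is to transfer the field statement in Remark~\ref{rem:THfield} to integer coefficients by way of the universal coefficient theorem, and then to read off the structure of $G$ from the join description.

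For (a), assume $\rH_n(\Delta;\Z)\ne0$ and write $\rH_n(\Delta;\Z)\cong\Z^{r_n}\oplus T_n$ as in Remark~\ref{rem:uct}. If $r_n>0$, then Remark~\ref{rem:uct}(a) gives $\rH_n(\Delta;\mathbb Q)\ne0$. If instead $r_n=0$, then $T_n\ne0$, so some prime $p$ divides the order of a cyclic summand of $T_n$, i.e.\ $t_n(p)\ge1$. Remark~\ref{rem:uct}(b) then yields $\dim_{\mathbb F_p}\rH_n(\Delta;\mathbb F_p)\ge t_n(p)\ge1$. In either case there is a field $\mathsf k$ with $\rH_n(\Delta;\mathsf k)\ne0$. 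Since $\Delta=\Ind(G)$ is a flag complex on $|V|$ vertices, the contrapositive of Remark~\ref{rem:THfield}(a) gives $|V|\ge 2n+2$.

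For (b), keep the same field $\mathsf k$ and apply Remark~\ref{rem:THfield}(b) with $N=|V|=2n+2$. This shows that $\Delta$ is the join of $n+1$ copies of $S^0$. It remains to translate this back into the graph $G$, and this is the only step requiring care. The argument is as follows.
\begin{enumerate}
\item[(i)] In the join $S^0*\cdots*S^0$ on $2n+2$ vertices, the minimal non-faces are exactly the $n+1$ pairs $\{a_i,b_i\}$ forming the copies of $S^0$. Indeed, any set that contains neither pair is a face.
\item[(ii)] Because $\Delta=\Ind(G)$ is flag, its minimal non-faces are precisely the edges of $G$.
\item[(iii)] Hence $E(G)=\{\{a_i,b_i\}\mid 1\le i\le n+1\}$. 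These pairs are disjoint and cover $V$, so $G$ is a disjoint union of $n+1$ edges.
\end{enumerate}
Alternatively, step (iii) can be phrased through Remark~\ref{rem:join}, since $\Ind$ of $n+1$ disjoint edges is exactly this join and $\Ind$ determines $G$ via the minimal non-faces.

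Degenerate cases, such as $n=0$ with $\Delta=S^0$, are covered by the same argument.
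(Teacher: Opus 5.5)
Your proposal is correct and follows essentially the same route as the paper. You pass from $\Z$ to a field ($\mathbb Q$ or $\mathbb F_p$) via Remark~\ref{rem:uct}, then apply Remark~\ref{rem:THfield}, and finally recover $G$ from the join of $n+1$ copies of $S^0$; the only cosmetic difference is that you justify that last step directly through minimal non-faces, whereas the paper simply cites Remark~\ref{rem:join}.
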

\begin{proof}
(a) Observe that if $\rH_n(\Delta;\Z)\ne0$, then it contains $\mathbb Z$ as a direct summand or $\Z/p^a\mathbb Z$ as a direct summand for some prime $p$ and $a\in \mathbb N$. If $\mathbb Z$ is a direct summand, then
$\rH_n(\Delta;\mathbb Q)\ne0$, and if $\Z/p^a\mathbb Z$ is a direct summand, then
$\rH_n(\Delta;\mathbb F_p)\ne0$ by Remark~\ref{rem:uct}. Hence, 
by Remark~\ref{rem:THfield}, we get that  if  $\rH_n(\Delta;\Z)\ne0$, then
$\vert V \vert \ge2n+2$. 

(b) If $\vert V \vert =2n+2$ and $\rH_n(\Delta;\Z)\ne0$, then by Remark~\ref{rem:THfield}, $\Ind(G)$ is the join of $n+1$ copies of $S^0$. By Remark~\ref{rem:join}, this is equivalent to $G$ being a disjoint union of $n+1$ edges.
\end{proof}

\section{The main theorem}\label{sec:main}

We are now ready to prove the main theorem of the article. Building on the results of the previous section, we obtain the lower bound 
$\tau(d)\geq d+10$ on the number of vertices needed for torsion, and then deduce that the seventh Betti number is field independent. This answers Katzman's question in the affirmative.

\begin{theorem}\label{thm:main}
Given any $d \geq 0$, we have $\tau(d)\geq d+10$.
\end{theorem}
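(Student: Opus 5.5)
We argue by contradiction via a minimal counterexample. Suppose the bound fails, and among all pairs $(d,G)$ with $\rH_d(\Ind(G);\Z)$ having torsion and $m:=|V(G)|\le d+9$, choose one with $m$ least. Since $\rH_0$ is torsion-free, $d\ge1$. By Remark~\ref{rem:eleven}, $m\ge 11$, and $m=11$ forces $d=1$; but then $m\le d+9=10$, which is impossible. Hence $m\ge12$, and so $d\ge m-9\ge3$.

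\textbf{Step 1 (minimum degree at least two).} If $G$ had vertices $v\ne u$ with $N_G(v)\subseteq N_G(u)$, Remark~\ref{rem:engstrom} would give $\Ind(G\setminus u)$ with the same torsion in degree $d$ on $m-1\le d+9$ vertices, contradicting minimality. If $G$ had a component that is a single edge, then $G=K_2\sqcup G'$, and Remark~\ref{rem:join} gives torsion in $\rH_{d-1}(\Ind(G');\Z)$ on $m-2\le (d-1)+9$ vertices, again contradicting minimality. By Lemma~\ref{lem:mindeg}, every vertex therefore has degree at least $2$.

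\textbf{Step 2 (maximum degree).} Fix a vertex $v$. The complex $X\setminus v=\Ind(G\setminus v)$ has $m-1\le d+9$ vertices, so by minimality $\rH_d(X\setminus v;\Z)$ is torsion-free. The link $\lk_X(v)=\Ind(G\setminus N_G[v])$ (Remark~\ref{eq:basic}) has $m-1-\deg v\le (d-1)+9$ vertices, so by minimality $\rH_{d-1}(\lk_X(v);\Z)$ is torsion-free. Lemma~\ref{lem:cyc} then gives $\rH_d(\lk_X(v);\Z)\neq0$. By Lemma~\ref{lem:TH}(a), $m-1-\deg v\ge 2d+2$, that is,
\[
\deg v\le m-2d-3\le 6-d .
\]
Combining this with $\deg v\ge2$ gives $d\le4$ and $m\ge 2d+5$. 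Together with $12\le m\le d+9$, only the pairs $(d,m)=(3,12)$ and $(4,13)$ remain.

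\textbf{Step 3 (eliminating the cases).} For $(4,13)$, every vertex has degree exactly $2$. Remark~\ref{rem:deg2} then says the homology is torsion-free, a contradiction. For $(3,12)$, all degrees lie in $\{2,3\}$. If every vertex had degree $2$, Remark~\ref{rem:deg2} again gives a contradiction, so some vertex $v$ has degree $3$. Then $\lk_X(v)$ has $8=2\cdot3+2$ vertices and nonzero $\rH_3$, so by Lemma~\ref{lem:TH}(b), $G\setminus N_G[v]$ is a disjoint union of $4$ edges. Each of those $8$ vertices has degree $1$ inside $G\setminus N_G[v]$ but degree at least $2$ in $G$. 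Hence each has a neighbour in $N_G(v)$, which produces at least $8$ edges between $N_G(v)$ and $V\setminus N_G[v]$. On the other hand, each of the $3$ vertices of $N_G(v)$ has degree at most $3$, one edge of which goes to $v$, so there are at most $6$ such edges. This contradiction finishes the proof.

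I expect the main obstacle to be Step 2. There one must check carefully that the minimality hypothesis applies both to $X\setminus v$ and to the link in the shifted degree $d-1$, so that Lemma~\ref{lem:cyc} is applicable. The final counting in Step 3 hinges on the rigid extremal description in Lemma~\ref{lem:TH}(b).
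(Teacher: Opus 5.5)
Your proof is correct and follows the paper's argument essentially step for step: a minimal counterexample, minimum degree at least two via Engstr\"om's domination lemma and the suspension, the upper bound $\deg v\le m-2d-3$ via the link lemma and Terai--Hibi, and then the degree-two and edge-counting eliminations. The only difference is that you use Remark~\ref{rem:eleven}(c)(ii) at the outset to force $m\ge12$, which disposes of $(2,11)$ and $(3,11)$ at once, whereas the paper rules out $(3,11)$ separately through the all-degrees-two argument.
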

\begin{proof}
For the sake of contradiction, assume that there exists $d \geq 0$ for which $\tau(d) \leq d+9$. Let $m$ be the least integer with the property that there exists a flag complex $X$ on $m$ vertices and an integer $d$ such that $\rH_d(X;\Z)$ has torsion and $m\le d+9$. Then $X=\Ind(G)$ for some graph $G$ on $m$ vertices. Let $V$ denote the vertex set of $G$.

By Remark~\ref{rem:eleven}, we get $m \geq 11$. Hence $m\le d+9$  gives $d\ge m-9\ge2$. We claim that $2 \leq \deg_G(v) \leq m-2d-3$ for all $v \in V$. If $G$ has a vertex of degree at most one, then by Lemma~\ref{lem:mindeg}, we have two cases: 

\textit{Case 1:} There exist $u, v \in V$ with $u \neq v$ such that $N_G(v) \subseteq N_G(u)$. \\
In this case, by Remark~\ref{rem:engstrom}, $\Ind(G\setminus u)$ is a flag complex on $m-1$ vertices having torsion in $\rH_d(\Ind(G\setminus u); \Z)$. But this contradicts the minimality of $m$ since $m-1 \leq d+8\leq d+9$.

\textit{Case 2:} $G$ has a connected component $G_1$ consisting of a single edge.\\
Let $G= G_1 \sqcup G_2$. Then by Remark~\ref{rem:join}, we see that $\Ind(G_2)$ is a flag complex on $m-2$ vertices having torsion in $\rH_{d-1}(\Ind(G_2) ; \Z)$. This is again a contradiction to the minimality of $m$ since $m-2 \leq (d-1)+8 \leq (d-1)+9$.

Therefore, by the two cases above, we get $2 \leq \deg_G(v)$ for all $v \in V$. To prove the remaining inequality, consider any $v \in V$. By Remark~\ref{eq:basic}, $X\setminus v=\Ind(G\setminus v)$ and $\lk_X(v)=\Ind(G \setminus N_G[v])$ are flag complexes on $m-1$ and $m-1-\deg_G(v)$ vertices, respectively. Note that since $m \leq d+9$, we have $m-1 \leq d+9$ and $m-1-\deg_G(v) \leq (d-1)+9$. 
Thus, by the minimality of $m$, we get that $\rH_d(X\setminus v;\Z)$ and $\rH_{d-1}(\lk_X(v);\Z)$ are torsion-free. By Lemma~\ref{lem:cyc}, we see that $\rH_d\bigl(\lk_X(v);\Z\bigr)\ne0 $.
Applying Lemma~\ref{lem:TH} to $\lk_X(v)$, we get $m-1-\deg_G(v)\ge2d+2$, i.e., $\deg_G(v) \leq m-2d-3$. This completes the proof of the claim.

By the claim, in particular, we obtain $2 \leq m-2d-3$, i.e., $2d+5 \leq m$. Using $m \leq d+9$, we get $2d+5 \leq d+9$, i.e., $d \leq 4$. Hence we have $2 \leq d \leq 4$. From $11\leq m \leq d+9$, we see that if $d=2$, then $m=11$. Similarly, if $d=3$, then $m \in \{11, 12\}$. Finally, if $d=4$, then $2d+5 \leq m$ gives $13\leq m$, and $m\leq d+9$ gives $m\leq 13$. Therefore, $m=13$. We therefore have the following possibilities for $(d,m)$: $(2,11), (3,11), (3, 12), (4, 13)$. 
To complete the proof of the theorem, we now show that none of these possibilities can occur.

If $(d,m)=(2,11)$, then $X$ is a flag complex on $11$ vertices with torsion in $\rH_2(X; \Z)$, which is impossible by Remark~\ref{rem:eleven}.

Let $(d,m) \in \{(3,11), (4,13)\}$.  Then $m-2d-3=2$. Hence, from $2 \leq \deg_G(v)\leq m-2d-3$, we get $\deg_G(v)\le2$ for all $v \in V$. By Remark~\ref{rem:deg2}, $\rH_n(X;\Z)$ is torsion-free for every $n$. Hence, $(d,m)\notin \{(3,11), (4,13)\}$.

Finally, let $(d,m)=(3,12)$. Then we have $2 \leq \deg_G(v) \leq m-2d-3=3$ for all $v \in V$. If $\deg(v)=2$ for all $v \in V$, then Remark~\ref{rem:deg2} applies and we get a contradiction as before. Therefore, there exists $v\in V$ such that $\deg_G(v)=3$. 

We have $\vert N_G(v) \vert =3$ and $ \vert V\setminus N_G[v] \vert = 8$. As observed earlier in this proof, we have $\rH_3(\lk_X(v); \Z)\ne0$. 
Also, by Remark~\ref{eq:basic},  $\lk_X(v)=\Ind(G\setminus N_G[v])$ is a flag complex on exactly $8=2\cdot3+2$ vertices. Hence, applying Lemma~\ref{lem:TH} with $\Delta=\Ind(G\setminus N_G[v])$, we get that  $G\setminus N_G[v]$ is a disjoint union of $4$ edges.

Since every $u \in G \setminus N_G[v]$ satisfies $\deg(u) \geq 2$ and since $u$ is not a neighbour of $v$, there exists an edge between $u$ and some vertex in $N_G(v)$. Since there are $8$ vertices in $G \setminus N_G[v]$, we have at least $8$ edges having one vertex in $G \setminus N_G[v]$ and other in $N_G(v)$. Moreover, each vertex in $N_G(v)$ is connected to $v$. This forces $\sum_{w \in N_G(v)} \deg_G(w) \geq 8+3 =11$. On the other hand, since every vertex in $G$ has degree at most $3$ and $\vert N_G(v)\vert=3$, we must have $\sum_{w \in N_G(v)} \deg_G(w)\leq 9$. This contradiction implies that the possibility $(d, m) = (3,12)$ also does not occur, and the proof is complete.
\end{proof}

Recall that Corollary~\ref{cor:tau} gives a criterion for the field independence of $\beta_i^{\mathsf k}(S/I(G))$ in terms of a bound on $\tau(d)$. Substituting the bound on $\tau(d)$ obtained in the theorem above gives us the following. 

\begin{corollary}\label{cor:beta7}
For every graph $G$ and every field $\mathsf k$, the Betti number $\beta_7^\mathsf k(S/I(G))$ is
independent of $\mathsf k$. 
\end{corollary}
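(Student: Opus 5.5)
The statement should follow by feeding Theorem~\ref{thm:main} into the criterion of Corollary~\ref{cor:tau} with $i=7$, and then arguing by contradiction.

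By Corollary~\ref{cor:tau}, $\beta_7^{\mathsf k}(S/I(G))$ depends on $\mathsf k$ for some graph $G$ if and only if $\tau(d)\le d+7+2=d+9$ for some $d\ge0$. So suppose, for contradiction, that some graph $G$ has a field dependent seventh Betti number. Then there is a $d\ge0$ with $\tau(d)\le d+9$. Theorem~\ref{thm:main}, on the other hand, gives $\tau(d)\ge d+10$ for every $d\ge0$, including the case $\tau(d)=\infty$. These two inequalities are incompatible, so no such graph exists. Hence $\beta_7^{\mathsf k}(S/I(G))$ is the same for every field $\mathsf k$ and every graph $G$.

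There is no genuine obstacle: all the work was done in Corollary~\ref{cor:tau} and Theorem~\ref{thm:main}. The only point needing care is the bookkeeping that $i=7$ translates into the threshold $d+i+2=d+9$. This threshold sits exactly one below the bound $d+10$ of Theorem~\ref{thm:main}, which is why the argument stops at $\beta_7$ and is consistent with Katzman's $11$-vertex examples, where $\tau(1)=11$ makes $\beta_8$ field dependent.
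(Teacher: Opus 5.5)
Your proposal is correct and is exactly the paper's proof: set $i=7$ in Corollary~\ref{cor:tau} to get the threshold $\tau(d)\le d+9$, which contradicts $\tau(d)\ge d+10$ from Theorem~\ref{thm:main}. Your side remark that $\tau(1)=11$ makes the $i=8$ threshold $d+10$ attainable is also consistent with the paper's discussion of Katzman's examples.
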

\begin{proof}
    Setting $i=7$ in Corollary \ref{cor:tau}, we see that $\beta_7^\mathsf k(S/I(G))$ depends on $\mathsf k$ for some graph $G$ if and only if $\tau(d)\leq d+9$ for some $d$. But this is impossible by Theorem \ref{thm:main}. 
\end{proof}
The above corollary answers the question of Katzman in affirmative.

\end{document}